\date{March 19, 2014}
\begin{document}

\centerline{\bf International Journal of Algebra, Vol. 8, 2014, no. 4, 195 - 204}

\centerline{\bf HIKARI Ltd, \ www.m-hikari.com}

\centerline{\bf http://dx.doi.org/10.12988/}

\centerline{}

\centerline{}

\centerline {\Large{\bf Induced Representations of Hopf Algebras}}

\centerline{}

\centerline{}

\centerline{\bf {Ibrahim Saleh}}

\centerline{Mathematics Department}

\centerline{University of Wisconsin-Marathon }

\centerline{Wausau, WI 54401}

\newtheorem{thm}{\quad Theorem}[section]

\newtheorem{defn}[thm]{\quad Definition}

\newtheorem{defns}[thm]{\quad Definitions}

\newtheorem{cor}[thm]{\quad Corollary}

\newtheorem{lem}[thm]{\quad Lemma}

\newtheorem{exam}[thm]{\quad Example}

\newtheorem{rem}[thm]{\quad Remark}

 \newtheorem{rems}[thm]{\quad Remarks}

 \newtheorem{exams}[thm]{\quad Examples}

 \newtheorem{conj}[thm]{\quad Conjecture}

 \newtheorem{que}[thm]{\quad Question}
\newcommand{\field}[1]{\mathbb{#1}}

 \newcommand{\eps}{\varepsilon}
 \newcommand{\To}{\longrightarrow}
 \newcommand{\h}{\mathcal{H}}
 \newcommand{\s}{\mathcal{S}}
 \newcommand{\A}{\mathcal{A}}
 \newcommand{\J}{\mathcal{J}}
 \newcommand{\M}{\mathcal{M}}
 \newcommand{\W}{\mathcal{W}}
 \newcommand{\X}{\mathcal{X}}
 \newcommand{\BOP}{\mathbf{B}}
 \newcommand{\BH}{\mathbf{B}(\mathcal{H})}
 \newcommand{\KH}{\mathcal{K}(\mathcal{H})}
 \newcommand{\Real}{\mathbb{R}}
 \newcommand{\Complex}{\mathbb{C}}
 \newcommand{\Field}{\mathbb{F}}
 \newcommand{\RPlus}{\Real^{+}}
 \newcommand{\Polar}{\mathcal{P}_{\s}}
 \newcommand{\Poly}{\mathcal{P}(E)}
 \newcommand{\EssD}{\mathcal{D}}
 \newcommand{\Lom}{\mathcal{L}}
 \newcommand{\States}{\mathcal{T}}
 \newcommand{\abs}[1]{\left\vert#1\right\vert}
 \newcommand{\set}[1]{\left\{#1\right\}}
 \newcommand{\seq}[1]{\left<#1\right>}
 \newcommand{\norm}[1]{\left\Vert#1\right\Vert}
 \newcommand{\essnorm}[1]{\norm{#1}_{\ess}}
%%% ----------------------------------------------------------------------

\centerline{}

{\footnotesize Copyright $\copyright$ 2014 Ibrahim Saleh. This is an open access article distributed under the Creative Commons Attribution License, which permits unrestricted use, distribution, and reproduction in any medium, provided the original work is properly cited.}

\begin{abstract} Hopf representation is a module and comodule with a consistency condition that is more general  than the consistency condition of  Hopf modules. For a Hopf algebra $H$,  we construct an induced  Hopf representation from a representation of a  bialgebra $B$ using  a bialgebra   epimorphism $\pi: H\rightarrow B$. Application on the quantum group $E_{q}(2)$ is given.
\end{abstract}

{\bf Mathematics Subject Classification (2000):} 16T05, 06B15,  17B37.\\

{\bf Keywords:} Representations theory, Hopf algebras, Quantum groups.

\section{Introduction}  Hopf algebras theory provides the ring theoretical ground for quantum groups, which has been one of the most active research topics for the past three decades. The discovery of the quantum groups has enriched the classical representations theory of Lie groups. One of the powerful tools in the theory of unitary representations of classical Lie groups is the induction process.  Induction of quantum groups representations was developed in several articles, for example [3, 6].

 From [5, Section 4.1], for a Hopf algebra $H$, the  module structure on a left $H$-module $W$, induces a specific left module structure on $H\otimes W$, (given in  part (1) of Remark 2.2). $W$ will be a left $H$-\emph{Hopf module} if it caries a comodule structure that is  consistent with the module structure. The consistency condition can be phrased as; the comodule structure defines a module homomorphism from $W$ to $H\otimes W$  with respect to the specific module structure on $H\otimes W$, Definition 2.4.  Theorem 2.3 provides two other equivalent conditions for the consistency condition. In this work, we relax the consistency condition by freeing it from the specific module structure on $H\otimes W$. A \emph{left Hopf representation} $W$ is defined to be a left  $H$-module that has a comudule structure which defines a homomorphism from $W$ to $H\otimes W$ with respect to some module structure on $H\otimes W$, Definition 2.5. Every Hopf module is by definition a Hopf representation but the converse is not necessarily true, Examples 2.6.

  The main result of this article is  constructing an induced Hopf representation for a Hopf algebra $H$ using a representation of a quantum subgroup $(B, \pi)$, where $B$ is a bialgebra and $\pi: H\longrightarrow B$ is a bialgebras epimorphism, Theorem 2.12.

The article is organized as follows. The second section consists of three subsections: first one is devoted for the definition of Hopf modules; in the second subsection we introduce  Hopf representations; and in the last subsection  we provide the induction construction for Hopf representations. We finish the article with an application on the quantum group $E_{q}(2)$. In the following, let $H=(H, \cdot, 1,\triangle ,\epsilon, S )$ be a Hopf algebra over a field $K$ and  $W$ be a $K$-space. We will use Sweedler sigma notations, $id$ as an identity map and $T$ as the twist map.

\section{Hopf  Representations}

\subsection{Hopf Modules}

The material of this subsection can be found in [1, 5].
\begin{defns}

\begin{enumerate}
  \item $(W, \alpha) $ is a \emph{left $H$-module} if the following two diagrams are commutative
  
\begin{equation}
       \xymatrix{
 { K \otimes W} \ar[r]\ar[d]_{1 \otimes id_{W}}& W \\
  {H\otimes W}  \ar[ur]_{\alpha}}; 
\end{equation} and
   \begin{equation}\label{}
   \xymatrix{
 {H\otimes H \otimes W} \ar[r]^{id_{H}\otimes \alpha}\ar[d]_{\cdot  \otimes id_{W}}& {H\otimes W} \ar[d]^{\alpha} \\
  {H\otimes W}  \ar[r]_{\alpha} &  {W}. }
   \end{equation}

  \item $(W, \beta)$ is a \emph{left $H$-comodule} if the following two diagrams are commutative
  
  \begin{equation}\label{}
    \xymatrix{
 {K \otimes W} & W \ar[l] \ar[dl]^{\beta} \\
  {H\otimes W}\ar[u]^{\epsilon \otimes id_{W}} };
  \end{equation}
  and 
  \begin{equation}\label{}
    \xymatrix{
 {H \otimes H \otimes W} & {H\otimes W}\ar[l]_{id_{H} \otimes \beta}  \\
  {H\otimes W} \ar[u]^{\triangle \otimes id_{W}}  &  {W}\ar[l]^{\beta}\ar[u]_{\beta} .}
  \end{equation}

    \item Let $(W_{1}, \alpha_{1})$ and $(W_{2}, \alpha_{2})$ be two left $H$-modules. Then, the map $f: W_{1}\longrightarrow W_{2}$ is called \emph{a left $H$-module homomorphism} with respect to $\alpha_{1}$ and $\alpha_{2}$ if the following diagram is commutative

\begin{equation}\label{}
     \xymatrix{
 {W_{1} } \ar[r]^{f}& {W_{2}}  \\
  {H\otimes W_{1}} \ar[u]^{\alpha_{1}} \ar[r]_{id_{H}\otimes f} &  {H\otimes W_{2}}\ar[u]_{\alpha_{2}}. }
\end{equation}

    \item Let $(W_{1}, \beta_{1})$ and $(W_{2}, \beta_{2})$ be two left $H$-comodules. Then, the map $f: W_{1}\longrightarrow W_{2}$ is called a \emph{left $H$-comodule homomorphism} respect to $\beta_{1}$ and $\beta_{2}$ if the following diagram is commutative

     \begin{equation}\label{}
      \xymatrix{
 {W_{1} } \ar[r]^{f}\ar[d]_{\beta_{1}}& {W_{2}}\ar[d]^{\beta_{2}}  \\
  {H\otimes W_{1}}  \ar[r]_{id_{H}\otimes f} &  {H\otimes W_{2}} .}
     \end{equation}

\end{enumerate}
\end{defns}

\begin{rem}  \begin{enumerate}
  \item
If $(W, \alpha)$ is a left $H$-module, then $(H\otimes W, \phi)$ is a
left $H-$module with

\begin{equation}\label{}
    \phi =(\cdot  \otimes \alpha)\circ (id_{H}\otimes T\otimes id_{W})\circ (\triangle \otimes  id_{H\otimes W}).
\end{equation}
  \item
If $(W, \beta)$ is a left $H$-comodule, then $(H\otimes W, \psi)$ is a
left $H$-comodule with

\begin{equation}\label{}
    \psi =(\cdot \otimes id_{H\otimes W})\circ(id_{H}\otimes T\otimes id_{W})\circ ( \triangle \otimes \beta ).
\end{equation}
\end{enumerate}

\end{rem}

\begin{thm} ([1, 5]) Let  $(W, \alpha)$ be a left $H$-module and $(W, \beta)$ be a left $H$-comodule. Consider $\phi$ and $\psi$ given in (7) and (8) respectively.  Then the following three conditions are equivalent

\begin{enumerate}
  \item
   The following diagram is commutative

   \begin{equation}\label{}
    \xymatrix{
 {H \otimes W }\ar[d]^{\triangle \otimes \beta} \ar[r]^{\alpha}& W \ar[r]^{\beta} & {H\otimes W}  \\
  {H\otimes H \otimes H \otimes W} \ar[r]_{id_{H}\otimes  T \otimes id_{W}}  &  {H\otimes H \otimes H \otimes W}\ar[ur]_{ \cdot \otimes \alpha} ;}
   \end{equation}

  \item $\beta $ is H-module morphism with respect to $\alpha $ and
$\phi $;
  \item $\alpha $ is H-comodule morphism with respect to  $\psi $ and $\beta$.
\end{enumerate}

\end{thm}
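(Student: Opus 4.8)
The plan is to show that conditions (1), (2), and (3) are each equivalent, after unwinding the diagrams in Sweedler notation, to the single compatibility identity
\[
\beta(h\cdot w)=h_{(1)}w_{(-1)}\otimes h_{(2)}w_{(0)}\qquad (h\in H,\ w\in W),
\]
where I write $\beta(w)=w_{(-1)}\otimes w_{(0)}$ for the coaction and $\alpha(h\otimes w)=h\cdot w$ for the action. Since all three conditions collapse to the same equation, their mutual equivalence is immediate; the whole argument is a matter of chasing each diagram and substituting the definitions (7) and (8) of $\phi$ and $\psi$.

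First I would unwind condition (1) by evaluating both legs of diagram (9) on a generator $h\otimes w$. The top leg $\beta\circ\alpha$ sends $h\otimes w$ to $\beta(h\cdot w)$, while the bottom leg $(\cdot\otimes\alpha)\circ(id_{H}\otimes T\otimes id_{W})\circ(\triangle\otimes\beta)$ carries it successively to $h_{(1)}\otimes h_{(2)}\otimes w_{(-1)}\otimes w_{(0)}$, then to $h_{(1)}\otimes w_{(-1)}\otimes h_{(2)}\otimes w_{(0)}$, and finally to $h_{(1)}w_{(-1)}\otimes h_{(2)}w_{(0)}$. Thus (1) is exactly the displayed identity. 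Next I would treat condition (2): by Definitions 2.1(3), $\beta$ is an $H$-module morphism with respect to $\alpha$ and $\phi$ precisely when $\beta\circ\alpha=\phi\circ(id_{H}\otimes\beta)$. Applying $id_{H}\otimes\beta$ to $h\otimes w$ gives $h\otimes w_{(-1)}\otimes w_{(0)}$, and feeding this through the definition (7) of $\phi$ --- split $h$ by $\triangle$, apply the twist, then multiply and act --- yields $h_{(1)}w_{(-1)}\otimes h_{(2)}w_{(0)}$, so (2) is again the same identity. Symmetrically, for condition (3) I would use Definitions 2.1(4): $\alpha$ is an $H$-comodule morphism with respect to $\psi$ and $\beta$ exactly when $\beta\circ\alpha=(id_{H}\otimes\alpha)\circ\psi$. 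Computing $\psi(h\otimes w)$ from (8) gives $h_{(1)}w_{(-1)}\otimes h_{(2)}\otimes w_{(0)}$, and applying $id_{H}\otimes\alpha$ produces $h_{(1)}w_{(-1)}\otimes h_{(2)}w_{(0)}$, once more the displayed identity.

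The essential point --- and the only place requiring care --- is that the composites $\phi\circ(id_{H}\otimes\beta)$ and $(id_{H}\otimes\alpha)\circ\psi$ both coincide with the lower composite of diagram (9) as maps $H\otimes W\to H\otimes W$; this follows directly once the single twist map $T$ is tracked through the correct tensor slot and the coassociativity/associativity bookkeeping is carried out. There is no genuine obstacle beyond keeping the order of the tensor factors straight, so after these three reductions the equivalence of (1), (2), and (3) is established.
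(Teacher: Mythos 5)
Your proof is correct: each of the three conditions, once evaluated on $h\otimes w$ using the definitions (7) of $\phi$ and (8) of $\psi$ and the paper's Definitions 2.1(3) and 2.1(4), reduces to the single Sweedler-notation identity $\beta(h\cdot w)=h_{(1)}w_{(-1)}\otimes h_{(2)}\cdot w_{(0)}$, which immediately gives the mutual equivalence. Note that the paper itself offers no proof of this theorem --- it is quoted from the references [1, 5] --- and your argument is precisely the standard verification found there, so there is nothing to compare beyond saying your route is the expected one.
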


\begin{defn} The triple $(W, \alpha, \beta)$ is called a \emph{left $H$-Hopf module}  if one the conditions of Theorem 2.3 is satisfied.
\end{defn}

\subsection{Hopf Representations}

\begin{defn}
\begin{enumerate}
               \item The quadrable $(W, \alpha, \beta, \phi)$ is called a left \emph{Hopf representation of the first type} if the following three conditions are satisfied
               \begin{enumerate}
                 \item $(W, \alpha)$ is a left $H$- module;
                 \item $(W, \beta)$ is a left $H$-comodule;
                 \item $(H\otimes W, \phi)$ is a left $H$-module, such that $\beta $ is H-module morphism with respect to $\alpha $ and $\phi $.

               \end{enumerate}
               \item The quadrable $(W, \alpha, \beta, \psi)$ is called a left \emph{Hopf representation of the second type} if the following three conditions are satisfied
               \begin{enumerate}
                 \item $(W, \alpha)$ is a left $H$- module;
                 \item $(W, \beta)$ is a left $H$-comodule;
                 \item $(H\otimes W, \psi)$ is a left $H$-comodule, such that $\alpha $ is H-comodule morphism with respect to to $\beta$ and $\psi $.

\end{enumerate}
               \item
              The quintuple $(W, \alpha, \beta, \phi, \psi)$ is said to be a left $H$-\emph{Hopf representation} if the following two conditions are satisfied
\begin{enumerate}
  \item $(W,\alpha, \beta, \phi)$ is a left  Hopf $H$-representation of the first type;
  \item $(W,\alpha, \beta, \psi)$ is a left  Hopf $H$-representation of the second type.

\end{enumerate}
 \end{enumerate}
\end{defn}

 Right Hopf representations can be defined in a similar way with the obvious changes.  The second example of the following two examples is  a Hopf representation that is not a Hopf module.

\begin{exams}
 \begin{enumerate}
\item
If $(W, \alpha ,\beta )$ is an $H$-Hopf module, then $(W, \alpha ,\beta, \phi, \psi)$ is an $H$-Hopf representation, where
$\phi $ and $\psi $ are defined as in (7) and (8) respectively.

  \item Let $W=H\otimes H$. Consider the following right $H$-module and $H$-comodule.
\begin{equation}\label{}
\nonumber \alpha: (H\otimes H)\otimes H\longrightarrow H\otimes H
\end{equation}
given by
\begin{equation}\label{}
   \nonumber h\otimes h'\otimes h''\longmapsto h\otimes h'\cdot h'',
\end{equation}

and
\begin{equation}\label{}
 \nonumber  \beta: H\otimes H\longrightarrow H\otimes H\otimes H
\end{equation}

given by
\begin{equation}\label{}
\nonumber h\otimes h'\longmapsto h_{(1)}\otimes h'\otimes h_{(2)}.
\end{equation}
One can see that $(W, \alpha )$ is a right $H$-module. To show that $(W, \beta)$ is a right comodule, first for   diagram  (3) (for right comodules) we have
\begin{eqnarray}
% \nonumber to remove numbering (before each equation)
 \nonumber(id_{H\otimes H}\otimes \epsilon)\circ \beta (h\otimes h') &=& (id_{H\otimes H}\otimes \epsilon )(h_{(1)}\otimes h'\otimes h_{(2)}) \\
 \nonumber  &=& h_{(1)}\otimes h'\otimes \epsilon (h_{(2)})\\
  \nonumber &=& h_{(1)} \epsilon (h_{(2)})\otimes h'\otimes 1 \\
 \nonumber &=& h\otimes h'\otimes 1.
\end{eqnarray}

For   diagram (4) (for right comodules) we have
\begin{eqnarray}
% \nonumber to remove numbering (before each equation)
 \nonumber(id_{H\otimes H}\otimes \triangle)\circ \beta (h\otimes h') &=& (id_{H\otimes H}\otimes \triangle )(h_{(1)}\otimes h'\otimes h_{(2)})\\
 \nonumber &=& h_{(1)}\otimes h'\otimes h_{(2)(1)}\otimes h_{(2)(2)} \\
  \nonumber &=& h_{(1)(1)}\otimes h'\otimes h_{(1)(2)}\otimes h_{(2)} \\
  \nonumber &=& (\beta \otimes id_{H})\circ \beta (h\otimes h').
\end{eqnarray}
In the following we will show that $(W, \alpha, \beta) $ is not a right Hopf
module by showing that  diagram (9) (for right modules and comodules) is not commutative. Let $x$ be a non unit  element of $H$. Then
\begin{equation}\label{}
    \nonumber (\beta \circ  \alpha )(h\otimes h'\otimes x)=\beta (h\otimes h'\cdot x)=h_{(1)}\otimes h'\cdot x\otimes h_{(2)}.
\end{equation}
However
\begin{eqnarray}
% \nonumber to remove numbering (before each equation)
  \nonumber (\alpha \otimes  \cdot )\circ (id_{H\otimes H}\otimes T\otimes id_{H})\circ (\beta \otimes \triangle)
(h\otimes h'\otimes x)\end{eqnarray}
\begin{eqnarray}
% \nonumber to remove numbering (before each equation)
 \nonumber &=& (\alpha \otimes  \cdot )\circ (id_{H\otimes H}\otimes T\otimes id_{H})(h_{(1)}\otimes h'\otimes h_{(2)}\otimes x_{(1)}\otimes x_{(2)}) \\
  \nonumber &=& h_{(1)}\otimes h'\cdot x_{(1)}\otimes h_{(2)}\cdot  x_{(2)}.
 \end{eqnarray}
 We define a right Hopf representation structure of the first type on  $(W, \alpha, \beta)$. Consider the map
\begin{equation}\label{}
   \phi: H\otimes H\otimes H\otimes H\longrightarrow H\otimes H\otimes H
\end{equation}

given by
\begin{equation}\label{}
\phi (h\otimes h'\otimes h'' \otimes h''')=h\otimes h'\cdot h'''\otimes h''.
\end{equation}
To see that $(H\otimes H\otimes H, \phi )$ is an $H$-module. One can see that  diagram  (1) (for right modules) is commutative for $\phi$, and for  diagram (2) (for right modules) we have
\begin{eqnarray}
% \nonumber to remove numbering (before each equation)
 \nonumber \phi \circ (\phi \otimes id_{H})(h\otimes h'\otimes h''\otimes h'''\otimes \bar{h})  &=&\phi (h\otimes h'\cdot h'''\otimes h''\otimes \bar{h}) \\
   \nonumber &=& h\otimes h'\cdot h'''\cdot \bar {h}\otimes h'' \\
  \nonumber &=& \phi \circ (id_{W\otimes H}\otimes \cdot )(h\otimes h'\otimes  h''\otimes h'''\otimes \bar{h}).
\end{eqnarray}
Finally,  $\beta $ is a right module homomorphism with respect to $\alpha$ and $\phi $, since

\begin{eqnarray}
% \nonumber to remove numbering (before each equation)
 \nonumber \beta (\alpha (h\otimes h'\otimes h'')) &=& \beta (h\otimes h'\cdot h'') \\
 \nonumber  &=& h_{(1)}\otimes h'\cdot h''\otimes h_{(2)} \\
 \nonumber &=& \phi (h_{(1)}\otimes h'\otimes h_{(2)}\otimes h'') \\
 \nonumber  &=& \phi (\beta (h\otimes h')\otimes h'').
\end{eqnarray}
Which makes $(W, \alpha, \beta, \phi )$  a first type right Hopf representation.  \  \  \ \ \ \ \        $\Box$

\end{enumerate}

\end{exams}

A subspace $W'$  of a left (respect to right) Hopf representation $W$ is called a sub Hopf representation of $W$ if the restrictions of the module, comodule and consistency structure maps on $W'$ form  Hopf  representation on $W'$. $W$ is called irreducible if it does not have any  nontrivial sub Hopf representation.  The following two remarks are handy.

\begin{rems}\begin{enumerate}
             \item The tensor product of two left (respect to right) Hopf representations is a a left (respect to right) Hopf representation.
             \item If $W'$ is a sub Hopf representation of $W$, then $W/W'$ has a unique Hopf representation structure such that the projection map $\rho: W\rightarrow W/W'$ is a module and comodule map.
           \end{enumerate}
\end{rems}

\subsection{Induced Hopf Representations}

\begin{defn}([3],[6]) Let $(B, \cdot_{B}, 1, \triangle_{B}, \epsilon_{B}) $ be a bialgebra.  The pair  $(B,\pi)$ is called \emph{a quantum subgroup }of $H$ if  $\pi:H\rightarrow B$ is a \emph{bialgebra epimorphism}, that is $\pi$ is an algebra epimorphism such that the following two diagrams are commutative
\begin{equation}\label{}
   \xymatrix{
 {H} \ar[r]^{\pi}\ar[d]_{\epsilon} & B\ar[dl]^{\epsilon_{B}} \\
  {K} };\end{equation}\label{} and   \begin{equation}\label{}  
    \xymatrix{
 { H } \ar[r]^{\pi}\ar[d]_{\triangle}& {B} \ar[d]^{\triangle_{B}} \\
  {W\otimes H}  \ar[r]_{\pi \otimes \pi} &  {B\otimes B}. }
\end{equation}

\end{defn}
In the following, Let  $(B,\pi)$ be a quantum subgroup of $H$ and $(L, \eta)$ be a $B$-module, where $L$ is a Hilbert space.

 The space of the induced representation as a subspace of $H\otimes L$ is given by
 \begin{equation}\label{}
  \nonumber  L'=\{h\otimes l; R(h)\otimes \eta (b\otimes l)=h\otimes 1\otimes \eta (b\otimes l),\quad \forall b\in B\}
 \end{equation}
 where
 \begin{equation}\label{}
    R=(id_{H}\otimes \pi)\circ \triangle .
 \end{equation}
The following two lemmas show that $L'$ is non trivial.
\begin{lem} $1\otimes l\in L'$, \ \  for all $l\in L$.
\end{lem}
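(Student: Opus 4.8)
The plan is to check directly that the pure tensor $1\otimes l$ satisfies the defining relation of $L'$, by substituting $h=1$ into that relation. Unwinding the notation, $R(h)=(id_{H}\otimes \pi)\circ\triangle(h)=h_{(1)}\otimes \pi(h_{(2)})$, so the statement $h\otimes l\in L'$ amounts to the requirement that
$$h_{(1)}\otimes \pi(h_{(2)})\otimes \eta(b\otimes l)=h\otimes 1\otimes \eta(b\otimes l), \qquad \forall b\in B.$$
Thus everything hinges on evaluating the map $R$ at the unit $1$ of $H$.

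First I would compute $R(1)$. Since $H$ is a bialgebra, $\triangle$ is a unital algebra map, so $\triangle(1)=1\otimes 1$; and since $(B,\pi)$ is a quantum subgroup, $\pi$ is a bialgebra epimorphism and in particular a unital algebra map, so $\pi(1)=1$. Hence
$$R(1)=(id_{H}\otimes \pi)(\triangle(1))=(id_{H}\otimes \pi)(1\otimes 1)=1\otimes 1.$$

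Next I would substitute $h=1$ into the defining relation. Its left-hand side becomes $R(1)\otimes \eta(b\otimes l)=1\otimes 1\otimes \eta(b\otimes l)$, which coincides with the right-hand side $1\otimes 1\otimes \eta(b\otimes l)$, for every $b\in B$. Therefore the relation holds identically in $b$, and I conclude $1\otimes l\in L'$ for all $l\in L$.

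The computation presents no real obstacle: it reduces entirely to the two structural identities $\triangle(1)=1\otimes 1$ and $\pi(1)=1$. The only point worth flagging is to make explicit the use of $\pi(1)=1$, which is precisely where the hypothesis that $\pi$ is an algebra (bialgebra) epimorphism—rather than merely a coalgebra map—enters; without unitality of $\pi$ the factor $\pi(h_{(2)})$ would not collapse to $1$ at $h=1$.
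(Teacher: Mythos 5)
Your proof is correct and follows exactly the route the paper intends: the paper's own proof is simply ``Straightforward from the definition of $L'$,'' and your computation of $R(1)=(id_{H}\otimes\pi)(\triangle(1))=1\otimes 1$ via $\triangle(1)=1\otimes 1$ and $\pi(1)=1$ is precisely the straightforward verification being alluded to. You have merely made explicit the details the author omitted.
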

\begin{proof} Straightforward from the definition of $L'$.
\end{proof}

\begin{lem} If $h\otimes l \in L'$ then $h\otimes \eta (b\otimes l)\in L',$ for all \ \  $b\in B$
\end{lem}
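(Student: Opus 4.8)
The plan is to reduce the membership of $h\otimes\eta(b\otimes l)$ in $L'$ back to the known membership of $h\otimes l$, using only the associativity axiom of the $B$-module $(L,\eta)$ together with the fact that the defining relation of $L'$ is quantified over all of $B$. Writing $R(h)=h_{(1)}\otimes\pi(h_{(2)})$, recall that $h\otimes l\in L'$ means precisely
\[
h_{(1)}\otimes\pi(h_{(2)})\otimes\eta(c\otimes l)=h\otimes 1\otimes\eta(c\otimes l),\qquad\forall\,c\in B .
\]

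First I would fix $b\in B$, put $l'=\eta(b\otimes l)$, and write out what must be shown: that for every $b'\in B$,
\[
R(h)\otimes\eta(b'\otimes l')=h\otimes 1\otimes\eta(b'\otimes l').
\]
The key observation is that, since $(L,\eta)$ is a left $B$-module, the module associativity axiom (the analogue of diagram (2) with $B$ in place of $H$) gives $\eta(b'\otimes l')=\eta\big(b'\otimes\eta(b\otimes l)\big)=\eta\big((b'\cdot b)\otimes l\big)$. Hence the element of $L$ sitting in the last tensor slot on both sides depends on $l$ only through $\eta\big((b'\cdot b)\otimes l\big)$.

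Next I would invoke the hypothesis $h\otimes l\in L'$, but specialised to the element $c=b'\cdot b\in B$ rather than to $b$ or $b'$ alone; this is the single point deserving care, since one must read the defining relation as holding for all of $B$ and then substitute the product $b'\cdot b$. That substitution gives
\[
h_{(1)}\otimes\pi(h_{(2)})\otimes\eta\big((b'\cdot b)\otimes l\big)=h\otimes 1\otimes\eta\big((b'\cdot b)\otimes l\big),
\]
and rewriting $\eta\big((b'\cdot b)\otimes l\big)$ back as $\eta(b'\otimes l')$ on both sides yields the desired identity for the arbitrary $b'$. As $b'$ ranges over $B$, this shows that $h\otimes l'=h\otimes\eta(b\otimes l)$ satisfies the defining condition, so it lies in $L'$.

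I expect no serious obstacle: the statement is really the assertion that $L'$ is stable under the $B$-action on the $L$-factor, and it follows formally once the quantifier in the definition of $L'$ is handled correctly. The only thing to watch is not to confuse the module associativity $\eta(b'\otimes\eta(b\otimes l))=\eta((b'\cdot b)\otimes l)$ with the coalgebra data on $H$; the factor $R(h)=h_{(1)}\otimes\pi(h_{(2)})$ is transported unchanged throughout, so the argument never needs to touch the bialgebra structure of $\pi$.
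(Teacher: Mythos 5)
Your proof is correct, but it takes a genuinely different route from the paper's. You specialize the defining relation of $L'$ at the product $c=b'\cdot b$ and then use the $B$-module associativity $\eta\bigl(b'\otimes\eta(b\otimes l)\bigr)=\eta\bigl((b'\cdot b)\otimes l\bigr)$ to convert that instance into the required relation for $l'=\eta(b\otimes l)$. The paper is more frugal: it takes the single instance of the hypothesis at $b$,
\[
R(h)\otimes \eta (b\otimes l)=h\otimes 1\otimes \eta (b\otimes l),
\]
inserts $b'$ as an extra tensor factor in front of the $L$-slot on both sides, obtaining $R(h)\otimes b'\otimes \eta (b\otimes l)=h\otimes 1\otimes b'\otimes \eta (b\otimes l)$, and then applies the linear map $id_{H}\otimes id_{B}\otimes \eta$, which contracts $b'\otimes\eta(b\otimes l)$ into $\eta\bigl(b'\otimes\eta(b\otimes l)\bigr)$; this is exactly the defining relation for $h\otimes\eta(b\otimes l)$ at the arbitrary $b'$. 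The difference is not cosmetic. The paper's argument never invokes the module axiom at all --- it would work verbatim for any linear map $\eta: B\otimes L\to L$ --- and it consumes only the one instance of the hypothesis at the given $b$, whereas yours consumes the instances at every product $b'\cdot b$ and genuinely needs $(L,\eta)$ to be an associative module. In exchange, your version makes the conceptual content visible: $L'$ is stable under the $B$-action on the $L$-factor because its defining condition, read through associativity, is invariant under right-translation of the quantified variable. Both are complete, valid proofs; the two differ by precisely one application of the module axiom, which your route spends and the paper's avoids.
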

\begin{proof} Since $h\otimes l\in L'$, then

\begin{equation}\label{}
 \nonumber     R(h)\otimes \eta (b\otimes l)=h\otimes 1\otimes \eta (b\otimes l).
\end{equation}

Now, if $b'$ is an  element of $B$, then
\begin{equation}\label{}
 \nonumber  R(h)\otimes b'\otimes \eta (b\otimes l)=h\otimes 1\otimes  b' \otimes \eta (b\otimes l)).
\end{equation}
Apply the  following map on both sides
\begin{equation}\label{}
 \nonumber   id_{H}\otimes id_{H}\otimes \eta ,
\end{equation}
we get
\begin{equation}\label{}
  \nonumber R(h) \otimes \eta(b' \otimes \eta (b\otimes l))=h\otimes 1\otimes \eta( b' \otimes \eta (b\otimes l)).
\end{equation}
\end{proof}

\begin{lem} If $h\otimes l \in L'$, then $\triangle (h)\otimes l \in H\otimes L'$.
\end{lem}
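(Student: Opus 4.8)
The plan is to read the membership $\triangle(h)\otimes l\in H\otimes L'$ off the defining identity of $L'$ by applying the comultiplication to the leading tensor factor and then invoking coassociativity. Writing $R(h)=\sum h_{(1)}\otimes \pi(h_{(2)})$, the hypothesis $h\otimes l\in L'$ says that
\[
\sum h_{(1)}\otimes \pi(h_{(2)})\otimes \eta(b\otimes l)=h\otimes 1\otimes \eta(b\otimes l)
\]
holds in $H\otimes B\otimes L$ for every $b\in B$. The target element is $\triangle(h)\otimes l=\sum h_{(1)}\otimes(h_{(2)}\otimes l)$, regarded inside $H\otimes(H\otimes L)$, and the goal is to show that each ``tail'' $h_{(2)}\otimes l$ satisfies the defining relation of $L'$ underneath the leading factor $h_{(1)}$.

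First I would apply $\triangle\otimes id_{B}\otimes id_{L}$ to both sides of the displayed identity, the $\triangle$ acting on the first ($H$-)slot. On the left $h_{(1)}$ becomes $\sum (h_{(1)})_{(1)}\otimes (h_{(1)})_{(2)}$, and coassociativity relabels the resulting iterated coproduct; on the right $h$ becomes $\sum h_{(1)}\otimes h_{(2)}$. The identity then reads
\[
\sum h_{(1)}\otimes h_{(2)}\otimes \pi(h_{(3)})\otimes \eta(b\otimes l)=\sum h_{(1)}\otimes h_{(2)}\otimes 1\otimes \eta(b\otimes l),\qquad \forall b\in B .
\]
By coassociativity the left-hand side equals $\sum h_{(1)}\otimes R(h_{(2)})\otimes\eta(b\otimes l)$, so the identity is precisely the statement that $\sum h_{(1)}\otimes\theta_{b}(h_{(2)}\otimes l)=0$ for all $b$, where $\theta_{b}\colon h\otimes l\mapsto R(h)\otimes\eta(b\otimes l)-h\otimes 1\otimes\eta(b\otimes l)$ is the linear map whose common kernel over all $b\in B$ defines $L'$. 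I note that only the coassociativity of $\triangle$ is used here, not the fact that $\pi$ is a coalgebra map.

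It then remains to turn this annihilation statement into genuine membership in $H\otimes L'$. I would finish with a basis argument: fix a $K$-basis $\{e_{i}\}$ of $H$ and write $\triangle(h)\otimes l=\sum_{i}e_{i}\otimes w_{i}$ with uniquely determined $w_{i}\in H\otimes L$. The identity just derived gives $\sum_{i}e_{i}\otimes \theta_{b}(w_{i})=0$ for every $b$, and linear independence of the $e_{i}$ forces $\theta_{b}(w_{i})=0$ for all $i$ and all $b$, i.e.\ $w_{i}\in L'$. Hence $\triangle(h)\otimes l=\sum_{i}e_{i}\otimes w_{i}\in H\otimes L'$, as required.

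The computational core (applying $\triangle\otimes id\otimes id$ and tidying up with coassociativity) is routine. The one point that genuinely deserves care, and which I expect to be the only real obstacle, is this final passage: because $L'$ is an intersection of kernels $\bigcap_{b}\ker\theta_{b}$ rather than a single kernel, one cannot simply invoke exactness of $H\otimes(-)$ on one map but must use that $H$ is free over the field $K$, so that tensoring commutes with the intersection, which is exactly what the basis expansion makes explicit.
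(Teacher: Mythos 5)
Your proposal is correct and takes essentially the same route as the paper: both apply the comultiplication to the first tensor factor of the defining identity of $L'$ and use coassociativity together with $R=(id_{H}\otimes \pi)\circ \triangle$ to arrive at $\sum h_{(1)}\otimes R(h_{(2)})\otimes \eta(b\otimes l)=\sum h_{(1)}\otimes h_{(2)}\otimes 1\otimes \eta(b\otimes l)$ for all $b\in B$. The only differences are cosmetic --- the paper derives this from the $b=1$ instance alone and then re-inserts a general $b$ by tensoring and applying $\eta$, and it ends with ``which finishes the proof'' where you explicitly carry out the basis/linear-independence step identifying $H\otimes L'$ with the common kernel of the maps $id_{H}\otimes\theta_{b}$, a point the paper leaves implicit.
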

\begin{proof} Let
\begin{equation}\label{}
 \nonumber f:L\longrightarrow H\otimes L,\qquad f (l)=1\otimes l. \end{equation}

Then,  we have
  \begin{eqnarray}
          % \nonumber to remove numbering (before each equation)
         \nonumber (id_{H}\otimes id_{H}\otimes f)(\triangle (h)\otimes l) &=& (\triangle \otimes id_{H\otimes L})\circ (id_{H}\otimes f )(h\otimes l)\\
         \nonumber     &=& (\triangle \otimes id_{H\otimes L})\circ (R\otimes id_{L})(h\otimes l)\\
         \nonumber     &=& ((\triangle \otimes id_{H})\circ R )\otimes id_{L})(h\otimes l)\\
         \nonumber     &=& ((id_{H}\otimes R)\circ \triangle )\otimes id_{L})(h\otimes l)\\
        \nonumber      &=& (id_{H}\otimes R\otimes id_{L})(\triangle \otimes id_{L})(h\otimes l).
          \end{eqnarray}
Where the second equation is by using $h\otimes l\in L'$ with  setting $b=1$ in the definition of $L'$, and the fourth equation is by using the  coassociativity  and (20). Now, for $b\in B$, we have

\begin{equation}\label{}
 \nonumber \triangle (h)\otimes 1\otimes b\otimes l=h_{(1)}\otimes R(h_{(2)})\otimes b\otimes l.
\end{equation}
 Hence
\begin{equation}\label{}
  \nonumber  \triangle (h)\otimes 1\otimes \eta (b\otimes l)=h_{(1)}\otimes R(h_{(2)})\otimes  \eta (b\otimes l),
\end{equation}
which finishes the proof.
\end{proof}

\begin{thm} $(L', \alpha, \beta, \phi _{H})$ is a first type Hopf representation of $H$, with
\begin{equation}\label{}
  \nonumber \alpha :H\otimes L'\longrightarrow L', \  where\ \  \alpha (h\otimes h'\otimes l)=h' \otimes \eta (\pi (h)\otimes l),
\end{equation}

\begin{equation}\label{}
\nonumber \beta :L'\longrightarrow H\otimes L',  \  \ where \ \ \beta (h\otimes l)=\triangle (h)\otimes l,
\end{equation}
and the consistency  map
\begin{equation}\label{}
  \nonumber  \phi_{H}:H\otimes H\otimes L'\rightarrow H\otimes L', \  \  by \ \ \phi_{H}(h\otimes h'\otimes h''\otimes l)=h'\otimes h''\otimes \eta(\pi (h)\otimes l).
\end{equation}
\end{thm}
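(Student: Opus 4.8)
The plan is to verify the three defining clauses of a first-type Hopf representation from Definition 2.5(1): that $(L',\alpha)$ is a left $H$-module, that $(L',\beta)$ is a left $H$-comodule, and that $(H\otimes L',\phi_H)$ is a left $H$-module for which $\beta$ is an $H$-module morphism with respect to $\alpha$ and $\phi_H$. Before touching any axiom, though, the genuine content is to check that the three structure maps are \emph{well defined}, i.e.\ that they really take values in the prescribed subspaces $L'\subseteq H\otimes L$ and $H\otimes L'$. This is exactly where Lemmas 2.8--2.10 do the work, and I expect it to be the only nontrivial step; once these containments are in hand, each axiom collapses to a one-line Sweedler computation.

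For well-definedness I would argue directly on generators $h\otimes l\in L'$. For $\alpha$, the slot $h'\otimes l$ already lies in $L'$, so Lemma 2.9 applied with $b=\pi(h)$ gives $h'\otimes\eta(\pi(h)\otimes l)\in L'$; hence $\alpha$ lands in $L'$. For $\beta$, the output $\triangle(h)\otimes l$ lies in $H\otimes L'$ by Lemma 2.10 verbatim. For $\phi_H$, the last two slots $h''\otimes l$ form an element of $L'$, so Lemma 2.9 again yields $h''\otimes\eta(\pi(h)\otimes l)\in L'$ and therefore $\phi_H$ lands in $H\otimes L'$. Lemma 2.8 guarantees in addition that $L'\neq 0$, so the statement is not vacuous. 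I would state these three containments up front and reuse them silently throughout.

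Next I would dispatch the module and comodule axioms. That $(L',\alpha)$ is a left $H$-module rests on two facts: since $\pi$ is a unital algebra map, $\alpha(1\otimes h'\otimes l)=h'\otimes\eta(1_B\otimes l)=h'\otimes l$, giving diagram (1); and since $\eta$ is a $B$-action and $\pi$ is multiplicative, associativity reduces to $\eta(\pi(g)\otimes\eta(\pi(h)\otimes l))=\eta(\pi(g)\pi(h)\otimes l)=\eta(\pi(gh)\otimes l)$, giving diagram (2). The identical computation with an inert spectator slot $h'$ shows $(H\otimes L',\phi_H)$ is a left $H$-module. That $(L',\beta)$ is a left $H$-comodule is immediate from the Hopf-algebra axioms of $H$: writing $\beta(h\otimes l)=h_{(1)}\otimes(h_{(2)}\otimes l)$, diagram (3) is the counit identity $\epsilon(h_{(1)})h_{(2)}=h$, and diagram (4) is coassociativity $\triangle(h_{(1)})\otimes h_{(2)}=h_{(1)}\otimes\triangle(h_{(2)})$, both carried along the untouched factor $l$.

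Finally, for the consistency condition I would check the square of diagram (5) on the nose, namely $\beta\circ\alpha=\phi_H\circ(id_H\otimes\beta)$. The left side sends $h\otimes h'\otimes l$ to $\beta(h'\otimes\eta(\pi(h)\otimes l))=h'_{(1)}\otimes h'_{(2)}\otimes\eta(\pi(h)\otimes l)$, while the right side sends it to $\phi_H(h\otimes h'_{(1)}\otimes h'_{(2)}\otimes l)=h'_{(1)}\otimes h'_{(2)}\otimes\eta(\pi(h)\otimes l)$, and the two coincide. This establishes clause (c) of Definition 2.5(1) and completes the proof. The main obstacle is thus not any of the axiom checks, which are purely formal, but the well-definedness step, and the three lemmas are precisely tailored to remove it.
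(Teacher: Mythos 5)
Your proposal is correct and takes essentially the same route as the paper's proof: verify the module axioms for $(L',\alpha)$, the comodule axioms for $(L',\beta)$, the module structure of $(H\otimes L',\phi_H)$, and the consistency square $\beta\circ\alpha=\phi_H\circ(id_H\otimes\beta)$ by direct Sweedler computations, with Lemmas 2.8--2.10 supplying the needed containments in $L'$ and $H\otimes L'$. The only difference is presentational: you make the well-definedness of the three structure maps explicit up front (a worthwhile clarification), whereas the paper leaves this implicit --- its preceding lemmas exist precisely for that purpose --- and dismisses the $\phi_H$ module check as straightforward.
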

\begin{proof} First, to see that $(L', \alpha) $ is a left $H$-module, we have
\begin{eqnarray}
% \nonumber to remove numbering (before each equation)
  \nonumber \alpha \circ (1\otimes id_{L'})(k\otimes h\otimes l)&=& \alpha (k\otimes h\otimes l)\\
   \nonumber &=& h\otimes \eta (\pi (k)\otimes l) \\
   \nonumber &=& k(h\otimes l).
 \end{eqnarray}
  And
  \begin{eqnarray}
  % \nonumber to remove numbering (before each equation)
   \nonumber  \alpha \circ (id_{H}\otimes \alpha )(h\otimes h'\otimes h''\otimes l) &=& \alpha (h\otimes h''\otimes \eta (\pi (h')\otimes l)) \\
    \nonumber  &=& h''\otimes \eta (\pi (h)\otimes \eta ((\pi(h')\otimes l)) \\
    \nonumber  &=& h'' \otimes \eta \circ (id_{H}\otimes \eta )(\pi (h)\otimes \pi (h')\otimes l) \\
   \nonumber   &=& h''\otimes \eta \circ (\cdot _{B}\otimes id_{L})(\pi (h)\otimes \pi (h')\otimes l) \\
    \nonumber  &=& h''\otimes \eta (\pi (h\cdot h')\otimes l) \\
    \nonumber  &=& \alpha \circ (\cdot \otimes id_{L'})(h\otimes h'\otimes h''\otimes l).
  \end{eqnarray}
 Second, to show that $(L', \beta)$ is a left $H$-comodule, we have
\begin{eqnarray}
% \nonumber to remove numbering (before each equation)
(\epsilon \otimes id_{L'})\circ \beta (h\otimes l )&=& (\epsilon \otimes id_{L'})(\triangle(h)\otimes l) \\
 \nonumber &=& \epsilon (h_{(1)})\otimes h_{(2)}\otimes l\\
 \nonumber &=& 1\otimes h\otimes l.
\end{eqnarray}
And
\begin{eqnarray}
% \nonumber to remove numbering (before each equation)
 ((\triangle \otimes id_{L'})\circ \beta )(h\otimes l) &=& (\triangle \otimes id_{L'})(h_{(1)}\otimes h_{(2)}\otimes l) \\
 \nonumber&=& \triangle (h_{(1)})\otimes h_{(2)}\otimes l \\
 \nonumber &=& h_{(1)}\otimes \triangle (h_{(2)})\otimes l \\
 \nonumber &=& h_{(1)}\otimes \beta (h_{(2)}\otimes l)\\
 \nonumber &=& (id_{H}\otimes \beta )\circ \beta (h\otimes l).
\end{eqnarray} $(H\otimes L', \phi _{H})$ is  a left $H$-module is straightforward.
Finally, to show that $\beta $ is a module map with respect to  $\alpha $ and $\phi_{H} $, we have
\begin{eqnarray}
% \nonumber to remove numbering (before each equation)
\nonumber \beta (\alpha (h \otimes h'\otimes l)) &=& \beta (h'\otimes \eta (\pi (h)\otimes l)) \\
 \nonumber &=& \triangle (h')\otimes \eta (\pi (h)\otimes l) \\
\nonumber &=& \phi _{H} (h\otimes h'_{(1)}\otimes h'_{(2)}\otimes l) \\
\nonumber &=& \phi _{H} (h\otimes \beta (h'\otimes l)) \\
 \nonumber  &=& (\phi _{H}\circ (id_{H}\otimes \beta ))(h\otimes h'\otimes l).
\end{eqnarray}

Hence $(L', \alpha ,\beta ,\phi _{H})$ is a first type left Hopf
representations of $H$.
\end{proof}

\begin{exam} Consider the quantum group $E_{q}(2)$ (over the field of complex numbers $\mathbb{C}$) where the basis of its irreducible representation coincides with the irreducible
representation of the quantum group $A(E_{q}(2))$, where $A(E_{q}(2))$ is the Hopf algebra generated by the elements $z,\bar z,a$ and $\bar a$ with the commutation relations $$z\bar
z=\bar z z=1,a\bar a=\bar aa,za=qaz,az=q\bar za,\bar a\bar z=q\bar z\bar a,z\bar a=q\bar az$$ where $q$ is a real number.\\
The comultiplication is given by
\begin{equation}\label{}
  \nonumber  \triangle (z)=z\otimes z, \triangle (a)=a\otimes 1+z\otimes a.
\end{equation}
 The counit and the antipode are given respectively by
 \begin{equation}\label{}
  \nonumber  \epsilon (z)=1,\epsilon (a)=0,S(z)=\bar z,S(a)=-\bar za.
 \end{equation}
We can choose a quantum subalgebra $A(K)$ corresponding to translations in $A(E_{q}(2))$ which defined as $\mathbb{C}[t,\bar{t}]$. The coproduct $\triangle _{K}$, counit $\epsilon _{K}$ and antipode
$S_{K}$ are given by
\begin{equation}\label{}
 \nonumber    \triangle_{K}(t)=t\otimes 1+1\otimes t, \epsilon _{K}(t)=0 , S _{K}(t)=-t.
\end{equation}
and the projection epimorphism, $\pi: A(E_{q})\longrightarrow \mathbb{C}[t,\bar t]$ is given by $\pi (a)=t,\pi (z)=1$. The Hopf subalgebra $A(K)$ is coabelian and its irreducible corepresentations are
one-dimensional. Now, if $\eta $ is the representation of $\mathbb{C}[t,\bar t]$ in $\mathbb{C}$, (for example $\eta (t\otimes c)=\epsilon (t)c=0$ and $\eta (1\otimes c)=c$) then
we have irreducible first type Hopf representation of $A(E_{K}(2))$ on the sub Hilbert space $L'$.
\end{exam}

{\bf Acknowledgements.} The author was  supported by University of Wisconsin Colleges 2014 Summer Research Grant while preparing this work.

{\bf Received: February 12, 2014}

\end{document}